\def\sumstar{\sideset{}{^*}\sum}
\def\le{\leqslant}
\def\leq{\leqslant}
\def\geq{\geqslant}
\def\ge{\geqslant}
\def\frak{\mathfrak}
\renewcommand{\mod}{\mathop{\rm{mod}}}
\numberwithin{equation}{section}
\newtheorem{cor}{COROLLARY}[section]
\newtheorem{prop}[cor]{PROPOSITION}
\newtheorem{lem}[cor]{LEMMA}
\theoremstyle{definition}
\newtheorem{theorem}{THEOREM}
\begin{document}
\title{\bf Note on a note of Goldston and Suriajaya}
\author[Friedlander]{J.B. Friedlander$^*$}
\thanks{$^*$\ Supported in part by NSERC grant A5123}
\author[Iwaniec]{H. Iwaniec}
%$^{**}$}
%\thanks{$^{**}$\ Supported in part by NSF grant DMS-1406981} 

\maketitle

\medskip

{\bf Abstract:} 
We show that the assumption of a weak form of the Hardy-Littlewood conjecture 
on the Goldbach problem
suffices to disprove the possible existence of exceptional zeros of 
Dirichlet $L$-functions. This strengthens a result of the authors 
named in the title.
%%\footnote{MSC 2010 subject classification number: 11M20} 

\section{\bf Introduction}

A fundamental problem in analytic number theory is that of establishing 
zero-free regions for Dirichlet $L$-functions. In case the corresponding 
character $\chi \mod q$ is complex, or alternatively, in the case of complex 
zeros $\rho= \beta + i\gamma$ with $\gamma \neq 0$, one has long known 
how to produce zero-free regions of the type 
\begin{equation}\label{eq:1.1}
\sigma \geq 1-c/\log q(|t|+2)
\end{equation}  
where $s=\sigma + it$. In the case both $\chi$ and $s$ are 
real much less is known, nothing more recent than a famous ``ineffective'' 
estimate of 
Siegel which replaces $\log q$ by $q^{\varepsilon}$. This exponentially 
weaker result has been a serious impediment to progress in many basic 
questions. 

In the absence of a solution to the problem of these exceptional zeros there 
are some useful results showing their rarity. There have also been
attempts to relate the question to other very difficult problems. 
One famous example, by now folklore, shows that the non-existence of such zeros 
would follow from improvements seductively small in the 
Brun-Titchmarsh theorem which gives uniform upper bounds for the number of 
primes in an arithmetic progression. 

In more recent years some results have been obtained by showing how relatively 
good bounds for exceptional zeros would follow from assumptions about the 
seemingly less directly related Goldbach conjecture.
The latter famous statement predicts that every even integer exceeding two 
can be written as the sum of two primes. Hardy and Littlewood [HL] put 
forth a conjectured asymptotic formula for the number of representations of $n$ 
as the sum of two primes. Following the normal practice in the subject, we 
find it simpler to consider a weighted sum over the repsentations involving 
the von Mangoldt function, one which leads to an entirely equivalent 
conjecture. Let 
\begin{equation}\label{eq:1.2}
G(n)=\sum_{\substack{m_1+m_2=n\\2\nmid m_1m_2}}\Lambda(m_1)\Lambda(m_2) . 
\end{equation}  
The Hardy-Littlewood conjecture predicts that, for $n$ even, we have
$G(n)\sim {\frak S}(n)n$
 where ${\frak S}(n)$ is a certain positive product over the primes, defined 
in~\eqref{eq:6.2} to~\eqref{eq:6.4} and easily large enough to imply Goldbach 
for all large $n$.  

A rather weakened, but still formidable, form of the Hardy-Littlewood 
conjecture which features in this (and in earlier) work is as follows.

\bigskip 

{\bf Weak Hardy-Littlewood-Goldbach Conjecture}: {\sl For all 
sufficiently large even $n$ we have}
\begin{equation}\label{eq:1.3}
\delta {\frak S}(n)n < G(n) < (2-\delta){\frak S}(n)n, 
\end{equation} 
{\sl for some fixed} $0<\delta <1$.

\medskip
\noindent Despite having a slightly different name, this is the same conjecture 
employed in [GS]. 
We are going to prove the following result. 

\begin{theorem} 
Assume that the Weak Hardy-Littlewood-Goldbach Conjecture~\eqref{eq:1.3} 
holds for all sufficiently large even $n$. Then there are no zeros of 
any Dirichlet $L$-function 
in the region~\eqref{eq:1.1} with a positive constant $c$ which is now 
allowed to depend on $\delta$.
\end{theorem}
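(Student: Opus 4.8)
The plan is to reduce to the exceptional-zero case and then play the Weak Hardy–Littlewood–Goldbach bounds against a well-chosen nonnegative average of $G(n)$. By the classical theory recalled around~\eqref{eq:1.1}, the region is already zero-free for complex $\chi$ and for any zero with $\gamma\ne0$; the only possible offender is a single real zero $\beta=1-\lambda/\log q$ of $L(s,\chi)$ for a real primitive character $\chi\bmod q$, with $\lambda$ small. So it suffices to show that the conjecture~\eqref{eq:1.3} forces $\lambda\ge c(\delta)$ for some $c(\delta)>0$. Suppose instead $\lambda$ is below a threshold to be fixed, and seek a contradiction.

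First I would fix a smooth $\Phi\ge0$ supported in $[1,2]$ and set $w(n)=\Phi(n/x)$ restricted to even $n$ with $q\mid n$, where $x\asymp q$. Since $w\ge0$, summing the pointwise bounds~\eqref{eq:1.3} gives
\[
\delta\,M_0<M<(2-\delta)\,M_0,\qquad M=\sum_n G(n)\,w(n),\quad M_0=\sum_n {\frak S}(n)\,n\,w(n).
\]
The singular-series average $M_0$ is standard to evaluate; the gain is that, unlike an individual $G(n)$, the average $M$ is accessible. The point of the condition $q\mid n$ is that it makes the exceptional character act coherently: if $n=m_1+m_2$ with $q\mid n$ then $\chi(m_2)=\chi(-1)\chi(m_1)$ for $(m_1m_2,q)=1$, so the entire exceptional contribution carries the single sign $\chi(-1)$.

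Next I would evaluate $M$ by opening $G(n)$, detecting $q\mid m_1+m_2$ by orthogonality of characters $\bmod\,q$, and separating the two variables (by the circle method, or by inserting $e^{-(m_1+m_2)t}$ and using Mellin transforms). Each inner sum becomes $\sum_m\Lambda(m)\psi(m)e^{-mt}$, to which the explicit formula applies. The principal character produces the main term, matching $M_0$; the term $\psi=\chi$ contributes the square of the exceptional-zero term, a quantity of sign $\chi(-1)$ and relative size
\[
c_1\,x^{2\beta-2}=c_1\,e^{-2\lambda\log x/\log q}=c_1\,e^{-2\lambda}\bigl(1+o(1)\bigr),\qquad c_1\to1,
\]
as $\beta\to1$. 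Hence $M=M_0\bigl(1+\chi(-1)c_1e^{-2\lambda}+(\text{error})\bigr)$. If $\chi(-1)=+1$ this pushes $M$ above $(2-\delta)M_0$, and if $\chi(-1)=-1$ it pushes $M$ below $\delta M_0$; in either case the bound is violated as soon as $e^{-2\lambda}>1-\delta$ (absorbing the error). This contradicts~\eqref{eq:1.3}, and unwinding the inequality yields $\lambda\ge c(\delta)$ with $c(\delta)$ of the shape $\tfrac12\log\frac1{1-\delta}$, which is exactly the asserted $\delta$-dependent zero-free region.

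The main obstacle is the error term above: I must show that, apart from the exceptional zero, no zero contributes at the main order to $M$. All non-exceptional zeros obey the classical bound $\ree\rho\le 1-c/\log q$ for bounded $|\gamma|$, and the smooth weights confine the relevant range to bounded $|\gamma|$; the delicate point is that there may be many such zeros spread across the $\phi(q)$ characters. This is controlled by a log-free zero-density estimate, which shows that only $O(1)$ zeros lie within $c/\log q$ of the line $\ree s=1$, so their total contribution is $O(e^{-2c})$ relative to $M_0$, smaller than the exceptional term once $\lambda$ is below threshold; equivalently, the Deuring–Heilbronn repulsion guarantees that a very close exceptional zero pushes all others far enough away. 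Making this uniformity precise, and checking that the choice $x\asymp q$ and the weight $\Phi$ leave enough room between $c_1e^{-2\lambda}$ and the error, is the technical heart of the argument.
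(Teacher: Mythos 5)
Your overall strategy is the same as the paper's: average $G(n)$ over multiples of $q$ with a nonnegative weight, use orthogonality to turn the sum into $\varphi(q)^{-1}\sum_\chi \chi(-1)\bigl(\sum_m\chi(m)\Lambda(m)\cdots\bigr)^2$, apply the explicit formula so that the exceptional character contributes a term of sign $\chi_1(-1)$ and size comparable to the main term, control all remaining zeros by a log-free density estimate with Deuring--Heilbronn repulsion (the paper quotes Bombieri's Th\'eor\`eme 14, giving $N(\alpha,T)\ll(1-\beta_1)(\log q)q^{b(1-\alpha)}T^3$), and conclude because $1+\chi_1(-1)\in\{0,2\}$ cannot lie strictly between $\delta$ and $2-\delta$. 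All of the key ideas are present.

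However, the parameter choice $x\asymp q$ is a step that would fail, and it cannot be patched without changing it. The relative contribution of a non-exceptional zero $\rho=\beta+i\gamma$ is $x^{-(1-\beta)}$, and the log-free density estimate permits $\asymp q^{b(1-\alpha)}$ zeros with $\beta\approx\alpha$, where $b>1$ is an absolute constant. With $x\asymp q$ the product is $q^{(b-1)(1-\alpha)}$, which grows as $\alpha$ decreases toward $\tfrac12$, so the error swamps the main term. This is precisely why the paper insists on $N\geq q^{b+1}$: only then is $N^{-(1-\alpha)}q^{b(1-\alpha)}\leq q^{-(1-\alpha)}$, which makes the total over all non-exceptional zeros $\ll N(1-\beta_1)\log q$ as in~\eqref{eq:5.1}. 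Your observation that only $O(1)$ zeros lie within $c/\log q$ of the line addresses only the zeros nearest the edge; those with $\tfrac12\le\beta\le 1-c/\log q$ are far more numerous and are the real obstruction at $x\asymp q$. (Note also that an error of size $O(e^{-2c})$ would not suffice even if established, since $e^{-2c}\to1$ as $c\to0$ while you need the error below $\delta/2$; the usable bound is $O\bigl((1-\beta_1)\log x\bigr)$, which the repulsion delivers and which is $\ll c$.) Two further points break at $x\asymp q$: the interval $[x,2x]$ contains only $O(1)$ multiples of $q$, so the singular-series average $M_0$ cannot be evaluated asymptotically (the paper's Lemma 6.1 has error $O((q/N)^{1/2})$, useless at $N\asymp q$); and the trivial error from terms with $(m_1m_2,2q)\neq1$ is $O(N(\log q)^2)$, which already forces $N\gg q(\log q)^2$. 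Taking $x=q^{b+1}$ repairs everything and still yields a zero-free region of width $c(\delta)/\log q$, merely with a smaller constant than the $\tfrac12\log\tfrac1{1-\delta}$ you hoped to extract.
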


Of course, as remarked above, we are allowed to restrict the proof to the 
case of real zeros of $L$-functions with real characters. We can also assume 
that the modulus $q$ is large since if there were only a finite number of 
exceptional zeros we could reach a region of type~\eqref{eq:1.1} simply by 
adjusting the constant $c$ (since $L(1,\chi)> 0$).  

Precursors of this work 
include %[Gr] 
[Fe], [BHMS], [BH], [Ji] and [GS]. A discussion of their results 
can be found in [GS]. 
%%the introduction to [GS]
%% and a survey in [BH]. 
In those works the results led to an interval of width $c/(\log q)^2$ rather 
than $c/\log q$.
Aside from an only slightly different 
approach, our main innovation is the use of 
a theorem of Bombieri, described in Section 4, which allows us to incorporate 
the zero repulsion principle used, for example, in connection with Linnik's 
theorem on the least prime in an arithmetic progression. 

\medskip

{\bf Acknowledgement} We were completely motivated to consider this problem by 
the paper [GS] of Goldston and Suriajaya and by the beautiful lecture of Dan 
Goldston delivered to the web seminar of the American Institute of Mathematics 
on May 4, 2021. We are also grateful to the Institute for having provided 
this venue.

\section {\bf A Goldbach number generating series}
 
We let $N$ and $q$ be given positive integers. We consider the sum 
\begin{equation}\label{eq:2.1}
S(q)=\sum_{n\equiv 0 (\mod q)}G(n)e^{-n/N}
\end{equation}  
where $G(n)$ is given by~\eqref{eq:1.2}.
We are going to estimate $S(q)$ in two different ways and 
then compare the results. 

\medskip

\section {\bf Zeros of $L$-functions and an explicit formula}

By the orthogonality property for the characters we write 
\begin{equation*}
\begin{aligned}
S(q)  & =  \sum_{\substack{m_1 +m_2\equiv 0 (\mod q)\\2\nmid m_1m_2}}\Lambda(m_1)\Lambda(m_2)e^{-(m_1+m_2)/N}  \\
& = \frac{1}{\varphi (q)}  
\sum_{\chi (\mod q)}\chi(-1)\sum_m\Bigl(\chi(m)\Lambda(m)e^{-m/N}\Bigr)^2
%%% \\ & 
+ O\Bigl(N(\log q)^2\Bigr)\ 
\end{aligned}
\end{equation*} 
where the error term accounts for a trivial estimation of the contribution 
to the sum from terms $m_1$, $m_2$ with $(m_1m_2, 2q) \neq 1$. 

\smallskip

Let $N\geqslant q$.
The principal character contributes to $S(q)$ an amount 
\begin{equation}\label{eq:3.1}
%\begin{aligned}
S_0(q) 
%& 
= \frac {1}{\varphi(q)} \Bigl( \sum_{(m,q)=1}\Lambda(m)e^{-m/N}\Bigr)^2
%\\ & 
= \frac{N^2}{\varphi (q)}\Bigl(1+O\bigl(\frac{1}{\log N}\bigr)\Bigr) .
%\end{aligned}
\end{equation}

\smallskip

The exceptional character $\chi_1 (\mod\, q)$ contributes 
\begin{equation}\label{eq:3.2}
S_1(q) = \frac {\chi_1(-1)}{\varphi(q)} P(\chi_1)^2
\end{equation} 
where for any $\chi$ we have  
\begin{equation}\label{eq:3.3}
P(\chi) = \sum_m\chi(m)\Lambda(m) e^{-m/N} . 
\end{equation} 
We have the trivial bound $P(\chi) \ll N$. Hence, all the other characters 
$\chi \neq \chi_0, \chi_1$, contribute at most
\begin{equation}\label{eq:3.4}
S_{\infty}(q) \ll \frac {N}{\varphi(q)} \sum_{\chi \neq \chi_0, \chi_1}|P(\chi)| .
\end{equation} 
We evaluate $P(\chi)$ for each $\chi \neq \chi_0$ as follows: 
\begin{equation}\label{eq:3.5}
\begin{aligned}
P(\chi) & = \frac {1}{2\pi i} \int_{(2)}\Gamma(s)N^s\frac{-L'}{L}(s) ds \\
& = -\sum_{\rho} \Gamma(\rho)N^{\rho} +O\Bigl(N^{\frac 12}(\log N)^2\Bigr) ,
\end{aligned}
\end{equation}  
on moving the line of integration to ${\rm Re \,  s} = \frac 12$. 
Here, $\rho = \beta +i\gamma$ runs over the zeros of $L(s,\chi)$ 
with $\frac 12 \le \beta < 1$,
giving the explicit formula for the sum $P(\chi)$ in~\eqref{eq:3.3}.

\section{\bf  Linnik's zero repulsion \`a la Bombieri}

%%{\bf  A theorem of Bombieri}

We have the following result. 

\begin{prop}
Suppose there is a real character $ \chi_1 (\mod q)$ with a real 
zero $\beta_1$ of $L(s, \chi_1)$ in the segment
\begin{equation}\label{eq:4.1}
1-\frac{c}{\log q} \le \beta_1 <1
\end{equation} 
where $c$ is a small positive constant.  Let $N(\alpha,T)$ denote 
the number of zeros, $ \rho =\beta + i\gamma$, $\rho \ne \beta_1$ of 
$L(s, \chi)$ for all $\chi (\mod q)$, counted with multiplicity, in the region 
\begin{equation}\label{eq:4.2}
\alpha \le \beta <1, \,\, |\gamma| \le T . 
\end{equation} 
Then, for $\frac 12 \le \alpha \le 1, T\ge 2$ 
\begin{equation}\label{eq:4.3}
N(\alpha, T)\ll (1-\beta_1 )(\log q) q^{b(1-\alpha)}T^3
\end{equation} 
where $b$ and the implied constant are absolute.
\end{prop}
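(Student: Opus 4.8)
The plan is to prove a Linnik-type zero-density estimate with explicit zero-repulsion built in, following Bombieri's approach to this circle of ideas. The key mechanism is that the hypothesized exceptional zero $\beta_1$ very close to $1$ forces all other zeros of the $L$-functions modulo $q$ to be repelled away from the line $\sigma=1$; the factor $(1-\beta_1)$ on the right of \eqref{eq:4.3} is precisely the quantitative expression of this repulsion. I would begin by invoking Bombieri's theorem (the one the authors advertise in the section title and in the introduction) as the engine that converts the existence of the small quantity $1-\beta_1$ into a saving across the entire zero count.

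Let me sketch the analytic core. The standard route to a log-free density estimate is via a mollified second moment. I would attach to each $\chi \mmod q$ a Dirichlet polynomial mollifier $M(s,\chi)=\sum_{d\le D}\mu(d)\chi(d)d^{-s}$ designed so that $L(s,\chi)M(s,\chi)$ is close to $1$, and then detect zeros in the rectangle \eqref{eq:4.2} by a contour/Cauchy argument that bounds the number of zeros by an integral of $|LM-1|^2$ or by a power-sum estimate à la Turán. Summing over all $\chi \mmod q$ and integrating over $|\gamma|\le T$ produces a mean value that, by the large sieve or the Montgomery--Halász mean-value inequality, is controlled by $\varphi(q)$ together with $q^{O(1-\alpha)}$ and a polynomial factor in $T$ (here the $T^3$). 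The crucial extra input is that, because $\beta_1$ is a zero of the real character $\chi_1$ extremely near $1$, the associated real, nonnegative arithmetic function $1\ast\chi_1$ (equivalently, the factor $L(s,\chi)L(s,\chi\chi_1)$) makes the relevant Dirichlet series nonnegative, and this positivity inserts a multiplicative gain of size $(1-\beta_1)\log q$ into the density bound. This is the classical Deuring--Heilbronn phenomenon, and Bombieri's formulation is what lets me extract it cleanly and with an absolute constant $b$ in the exponent $q^{b(1-\alpha)}$.

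Concretely, the steps in order are: (i) set up the mollified function and a zero-detecting inequality so that each zero $\rho\neq\beta_1$ in \eqref{eq:4.2} contributes a definite amount to a weighted mean square; (ii) sum over characters and integrate over the vertical segment $|\gamma|\le T$, applying a large-sieve-type inequality to bound the off-diagonal and diagonal terms, yielding a preliminary bound of the shape $N(\alpha,T)\ll (\log q)\,q^{b(1-\alpha)}T^{3}$; and (iii) feed in Bombieri's repulsion estimate to multiply this by the repulsion factor $(1-\beta_1)$, arriving at \eqref{eq:4.3}. The choice of mollifier length $D$ as a suitable power of $q$ is what fixes the absolute exponent $b$, and tracking the dependence on $T$ through the Gamma-function decay in the explicit formula \eqref{eq:3.5} and the conductor $q(|\gamma|+2)$ of the twisted functions is what yields the $T^3$.

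The main obstacle I anticipate is making the repulsion gain $(1-\beta_1)$ genuinely multiplicative and uniform, rather than merely additive or restricted to zeros within distance $O(1/\log q)$ of $\beta_1$. Getting the positivity argument to couple correctly with the density-estimate machinery—so that the small size of $1-\beta_1$ saves on \emph{every} zero counted in the wide range $\tfrac12\le\alpha\le1$—is delicate, and it is exactly here that I would lean on the precise statement of Bombieri's theorem to be recalled in this section; absent that, one typically only recovers the weaker repulsion valid very near $\beta_1$. Keeping the constant $b$ absolute (independent of $\delta$ and of the zero) while doing so is the second delicate point, requiring that the mollifier length and all large-sieve constants be chosen without reference to the hypothesized zero.
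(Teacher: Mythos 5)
You have correctly identified the engine --- Bombieri's Th\'eor\`eme 14 and the Deuring--Heilbronn repulsion it encodes --- but your proposal does not actually prove the Proposition: it oscillates between re-deriving Bombieri's theorem from scratch and citing it, and neither branch is carried out. The from-scratch branch (mollifiers, large sieve, power sums) founders exactly where you say it does: your step (iii), in which the preliminary bound $(\log q)q^{b(1-\alpha)}T^3$ gets ``multiplied'' by $(1-\beta_1)$, is not an argument but a restatement of the desired conclusion; obtaining the repulsion factor multiplicatively and uniformly over the whole range $\tfrac12\le\alpha\le1$ is the entire content of Bombieri's theorem, and you concede you would ``lean on the precise statement'' of that theorem to do it. Once you decide to cite the theorem, the mollifier apparatus is superfluous, and the work actually required is different from, and absent from, what you describe.

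The paper's proof is a short formal deduction. Bombieri's result bounds $\sum_{k\le T}\sum^*_{\chi(\mod k)}N(\alpha,T,\chi)$ over \emph{primitive} characters to \emph{all} moduli $k\le T$, with the exceptional zero excluded, by $(1-\beta_1)(\log T)T^{b(1-\alpha)}$. To deduce Proposition 4.1 one must (i) pass from the characters $\chi\ (\mod q)$ to the primitive characters $\chi\ (\mod k)$ with $k\mid q$ that induce them, which have the same zeros in the critical strip; (ii) observe that $k\mid q$ forces $k\le q$, so that for $T\le q$ one has $N(\alpha,T)\le\sum_{k\le q}\sum^*_{\chi(\mod k)}N(\alpha,q,\chi)$ and Bombieri's bound applies with $T$ replaced by $q$; and (iii) dispose of the remaining range $T>q$ by the classical count $N(T,\chi)\ll T\log qT$, which is where the wasteful factor $T^3$ is absorbed. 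None of these steps appears in your write-up; in particular the mismatch between ``primitive characters to all moduli up to $T$'' in Bombieri's statement and ``all characters to the single fixed modulus $q$, with an independent height parameter $T$'' in the Proposition is the one thing that genuinely needs to be checked, and it is the one thing your proposal does not address.
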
 

This proposition follows from Th\'eor\`eme 14  of Bombieri [B] 
which states the following.  
\begin{prop} For $\frac 12 \le \alpha \le 1$ and  $T\ge 2$, we have
\begin{equation*}
\sum_{q\le T}\, \sumstar_{\chi(\mod q)}N(\alpha, T, \chi)
\ll (1-\beta_1)(\log T)T^{b(1-\alpha)}
\end{equation*}
where $\chi$ runs over primitive 
characters and $N(\alpha, T, \chi)$ is the number of zeros 
$ \rho =\beta + i\gamma$, of $L(s, \chi)$ counted with multiplicity, 
in the region~\eqref{eq:4.2}, apart from one real and simple zero 
$\beta_1 \geqslant 1-c/\log T$. 
\end{prop}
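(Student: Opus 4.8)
The plan is to prove this as a \emph{log-free zero-density estimate carrying Deuring--Heilbronn repulsion}, built from three ingredients: a zero-detection device, a mean-value bound handling the double average over $q\le T$ and over primitive $\chi$, and the repulsion mechanism supplying the factor $(1-\beta_1)$. Because the outer sum runs over \emph{all} moduli $q\le T$ and over primitive characters, the natural instrument for that average is the large sieve --- precisely Bombieri's \emph{grand crible} --- and it is this tool that makes the modulus-uniform bound $T^{b(1-\alpha)}$ (rather than one with $q$ separated out) attainable; here $\chi_1$ denotes the real character, to some modulus $q_1\le T$, of which the excluded simple real zero $\beta_1\ge 1-c/\log T$ is a zero.

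First I would set up zero detection. For a primitive $\chi(\mod q)$ with $q\le T$ and a zero $\rho=\beta+i\gamma$ in the box \eqref{eq:4.2}, introduce the mollifier $M_X(s,\chi)=\sum_{n\le X}\mu(n)\chi(n)n^{-s}$ with $X=(qT)^A$. Since $L(s,\chi)M_X(s,\chi)=1+\sum_{n>X}b_n\chi(n)n^{-s}$ with $b_n=\sum_{d\mid n,\,d\le X}\mu(d)$ and $|b_n|\le\tau(n)$, evaluating at $s=\rho$ (where $L$ vanishes) and smoothing against a Mellin weight forces
\[
1\ll\Bigl|\sum_{X<n\le y}b_n\chi(n)n^{-\rho}\Bigr|
\]
for some $y\ll X^{O(1)}$. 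Thus each zero is \emph{detected} by the largeness of a short Dirichlet polynomial evaluated at $\rho$.

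Next I would sum over detected zeros and apply the hybrid large sieve. As the zeros of all the $L(s,\chi)$ with $q\le T$ number $\ll\log T$ per unit $\gamma$-interval per character, I first thin them to a well-spaced set, then raise the detection polynomial to a $2k$-th power to lengthen it, and invoke Gallagher's inequality (with $Q=T$)
\[
\sum_{q\le Q}\frac{q}{\varphi(q)}\sumstar_{\chi(\mod q)}\int_{-T}^{T}\Bigl|\sum_{n\le M}a_n\chi(n)n^{-it}\Bigr|^2 dt\ll(Q^2T+M)\sum_{n\le M}|a_n|^2,
\]
with $M=y^{k}$. Balancing $k$, $X$ and the threshold $\alpha$ in the standard way gives $N(\alpha,T)\ll T^{b(1-\alpha)}(\log T)^{O(1)}$ with $b$ absolute, after which a Hal\'asz--Montgomery reflection argument removes the surplus logarithms and leaves the single factor $\log T$.

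The hard part, and the distinctive content of the theorem, is the repulsion factor $(1-\beta_1)$. The lever is that an exceptional zero forces $L(s,\chi_1)$ to be small near $s=1$: since $L(\beta_1,\chi_1)=0$, the mean value theorem together with $L'(\sigma,\chi_1)\ll(\log q)^2$ gives $L(1,\chi_1)\ll(1-\beta_1)(\log q)^2$, so $\chi_1$ nearly behaves as an extra mollifying factor. I would exploit this by twisting: instead of $L(s,\chi)$ alone I work with $L(s,\chi)L(s,\chi\chi_1)$, whose coefficients are $\chi(n)\sum_{d\mid n}\chi_1(d)$, and mollify accordingly. In the resulting mean value the surviving diagonal ``main term'' then carries a factor comparable to $L(1,\chi_1)$, that is $\ll(1-\beta_1)\log q$, which is exactly the repulsion gain, while $\beta_1$ itself is excluded from the count by construction. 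The main obstacle is to make this twist cohere with the log-free balancing of the previous step: one must insert $\chi_1$ without inflating the lengths or coefficient norms fed into the large sieve, and must extract the $(1-\beta_1)$ saving \emph{before}, not after, the logarithmic losses are repaired. Producing the repulsion factor and the clean exponent $b(1-\alpha)$ simultaneously is the delicate balance at the heart of Bombieri's argument.
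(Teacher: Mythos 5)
First, a point of orientation: the paper does not prove this statement at all. It is quoted verbatim as Th\'eor\`eme~14 of Bombieri's \emph{Le Grand Crible} [B], and the authors' only work in Section~4 is the deduction of Proposition~4.1 from it. So there is no in-paper proof to compare yours against; the relevant comparison is with Bombieri's argument in [B], and the question is whether your sketch would stand on its own.

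It would not, as written. Your architecture is the right one --- a mollified zero-detection polynomial, Gallagher's hybrid large sieve to execute the double average over $q\le T$ and over primitive $\chi$, and Deuring--Heilbronn repulsion extracted from $L(1,\chi_1)\ll(1-\beta_1)(\log q)^2$ by building $\chi_1$ into the detector --- and this is essentially how Bombieri, Gallagher and Jutila proceed. But the two steps that constitute the actual content are left as descriptions of difficulties rather than arguments. (i) The balancing of $k$, $X$ and $\alpha$ that is to produce $T^{b(1-\alpha)}$ with \emph{no} extraneous powers of $\log T$ is asserted ``in the standard way,'' and your claim that a Hal\'asz--Montgomery reflection ``removes the surplus logarithms and leaves the single factor $\log T$'' misreads the shape of the bound: the lone $\log T$ in the statement is not a residual loss to be whittled down but the partner of the repulsion factor (since $\beta_1\ge 1-c/\log T$ one has $(1-\beta_1)\log T\le c$, so the right-hand side is genuinely log-free); it arises from the repulsion mechanism itself, essentially from $\sum_{n\le x}\bigl(\sum_{d\mid n}\chi_1(d)\bigr)n^{-1}\ll L(1,\chi_1)\log x+\cdots$, not from repairing large-sieve losses. (ii) The extraction of the factor $(1-\beta_1)$ from the twisted mean value --- exactly where you concede the ``delicate balance'' lies --- is the theorem, and you do not carry it out: one must show that after inserting the coefficients $\sum_{d\mid n}\chi_1(d)$ the diagonal of the large-sieve mean value is $\ll(1-\beta_1)\log T$ uniformly in the detection parameters, that the off-diagonal contribution stays within the $T^{b(1-\alpha)}$ budget, and that $\beta_1$ itself (which defeats the detection inequality) is the only zero needing exclusion. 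Until those computations are done, the factor $(1-\beta_1)$ has not been produced and the proposition has not been proved.
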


To deduce Proposition 4.1 we first consider the case $T\le q$. 
Now, our sum $N(\alpha, T)$ satisfies 
\begin{equation*}
\begin{aligned}
N(\alpha, T) & \le N(\alpha, q) \le \sum_{\chi (\mod q)} N(\alpha, q, \chi)
%% \\ & 
= \sum_{k | q}\sumstar_{\chi (\mod k)} N(\alpha, q, \chi) \\
& \le \sum_{k \le q}\,\sumstar_{\chi (\mod k)} N(\alpha, q, \chi)
\ll (1-\beta_1)(\log q)q^{b(1-\alpha)} ,
\end{aligned}
\end{equation*}  
giving the result in this case.

On the other hand, if $T>q$ the result follows from the classical 
bound $N(T, \chi) \ll T\log qT$. 

\medskip

We remark that the factor $T^3$ in~\eqref{eq:4.3} is wasteful but 
simplifies the proof and is unimportant for our application.

\section {\bf  First estimation of $S(q)$}

We are now ready to complete the estimation of $S(q)$ by means of 
Dirichlet characters.
We input the result of Proposition 4.1. We derive by partial summation that 
\begin{equation}\label{eq:5.1}
\sum_{\chi (\mod q)}\, \sum_{\rho \neq \beta_1}|\Gamma(\rho)N^{\rho}|
\ll N(1-\beta_1 )\log q ,
\end{equation} 
provided $N\geq q^{b+1}$. Hence, by~\eqref{eq:3.4} and~\eqref{eq:3.5}, we have
\begin{equation}\label{eq:5.2}
S_{\infty}(q) \ll \frac{N^2}{\varphi (q)}(1-\beta_1 )\log q .
\end{equation} 
On the other hand, taking only the exceptional character rather than 
the others in~\eqref{eq:5.1}, we get    
\begin{equation}\label{eq:5.3}
P(\chi_1) = -\Gamma(\beta_1)N^{\beta_1} + O\bigl(N(1-\beta_1 )\log q\bigr)
\end{equation} 
and so
\begin{equation}\label{eq:5.4}
P(\chi_1)^2 = \Gamma(\beta_1)^2N^{2\beta_1} + O\bigl(N^2(1-\beta_1 )\log q\bigr) .
\end{equation} 

Finally, adding the above estimates we obtain
\begin{equation}\label{eq:5.5}
\begin{aligned}
S(q) & = S_0(q) +\frac{\chi_1(-1)}{\varphi (q)}N^{2\beta_1} 
+ O\Bigl(\frac{N^2}{\varphi(q)}(1-\beta_1 )\log q\Bigr)\\
& = \bigl(1+ \chi_1(-1) + \varepsilon(q,N)\bigr)\frac{N^2}{\varphi(q)} ,
\end{aligned}
\end{equation} 
where 
\begin{equation}\label{eq:5.6}
\varepsilon(q,N)\ll (1-\beta_1 )\log N + (\log N)^{-1}
\end{equation} 
if $N\ge q^{b+1}$. The implied constant is absolute. 

\section {\bf  A model of $S(q)$}

In this section we consider the a model for $S(q)$ to be 
given by~\eqref{eq:2.1}. 
Although our computations in this section, as for those earlier, are 
unconditional, the closeness of the model to the sum $S(q)$ is not. 
That will rest on the assumption in the next section. 

The Hardy-Littlewood conjecture for
the Goldbach problem asserts that, for $n$ even, 

\begin{equation}\label{eq:6.1}
G(n) \sim {\frak S}(n)n ,
\end{equation} 
where 
\begin{equation}\label{eq:6.2}
{\frak S}(n)= 2CH(n) ,
\end{equation} 
and, in turn, $C$ is the positive absolute constant 
\begin{equation}\label{eq:6.3}
C =\prod_{p>2}\Bigl(1-\frac{1}{(p-1)^2}\Bigr) ,
\end{equation} 
and $H(n)$ is the multiplicative function 
\begin{equation}\label{eq:6.4}
H(n) = \prod_{\substack{p|n\\p>2}}\Bigl(1+\frac{1}{(p-2)}\Bigr) . 
\end{equation} 

\medskip 

\begin{lem}
We have 
\begin{equation}\label{eq:6.5}
\sum_{\substack{n\equiv 0(\mod q)\\n\,\,{\rm even}}}{\frak S}(n)ne^{-n/N}
= \frac{N^2}{\varphi (q)}\Bigl(1+O\Bigl(\frac{q}{N}\Bigr)^{\frac12}\Bigr) .
\end{equation} 
\end{lem}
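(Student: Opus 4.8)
The plan is to expand the singular series as a Dirichlet convolution, interchange summation, and evaluate the resulting geometric series. First I would write $\mathfrak{S}(n)=2C\sum_{d\mid n}h(d)$, where $h=H*\mu$ is the multiplicative function supported on the odd squarefree integers with $h(d)=\prod_{p\mid d}(p-2)^{-1}$; a direct check gives $h(p)=1/(p-2)$, $h(p^k)=0$ for $k\ge2$, and $h(2)=0$, so that $\sum_{d\mid n}h(d)=H(n)$ as in~\eqref{eq:6.4}. Substituting this into the left-hand side of~\eqref{eq:6.5} and interchanging the two summations, the condition $d\mid n$ combines with $q\mid n$ and $2\mid n$ to restrict $n$ to the multiples of $L=\mathrm{lcm}(2,q,d)$; since $h$ is supported on odd $d$ one has $L=q'd/\gcd(q,d)$ with $q'=\mathrm{lcm}(2,q)$. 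Writing $n=mL$ reduces the inner sum to the exact evaluation
\begin{equation*}
\sum_{m\ge1}mL\,e^{-mL/N}=\frac{L\,e^{-L/N}}{(1-e^{-L/N})^{2}}=\frac{N^{2}}{L}+O(L),
\end{equation*}
the last step holding for $L\le N$, while for $L>N$ the left side is $O(Le^{-L/N})$.

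Next I would extract the main term by summing $2C\sum_d h(d)N^2/L$ over all $d$. This is an absolutely convergent Euler product, since $h(p)/p=O(p^{-2})$. Using $1/L=\gcd(q,d)/(q'd)$ and the multiplicativity of $h$, the product factors as $\tfrac{2CN^2}{q'}\prod_{p>2}\bigl(1+\gcd(q,p)\,h(p)/p\bigr)$, whose local factor equals $(p-1)^2/(p(p-2))$ for $p\nmid q$ and $(p-1)/(p-2)$ for $p\mid q$. The factors with $p\nmid q$ telescope against the definition~\eqref{eq:6.3} of $C$, and after simplification one is left with $\tfrac{N^2}{q'}\cdot2\prod_{p\mid q,\,p>2}\frac{p}{p-1}$. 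Using $\varphi(q)=q\prod_{p\mid q}(1-1/p)$ together with $q'=\mathrm{lcm}(2,q)$, this equals $N^2/\varphi(q)$ in both parities of $q$, which is exactly the claimed main term.

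It remains to bound the error $2C\sum_d h(d)\bigl|f(L)-N^2/L\bigr|$, where $f(L)$ denotes the geometric sum above; this I expect to be the main obstacle. For $L\le N$ I would use the bound $O(L)$, and for $L>N$ the bound $f(L)=O(Le^{-L/N})$ together with the tail $\sum_{d:\,L>N}h(d)N^2/L$ of the (convergent) main-term series. Factoring out the finite sum $A_q=\prod_{p\mid q,\,p>2}\bigl(1+h(p)\bigr)$ coming from the part of $d$ dividing $q$, everything reduces to the two sums $\sum_{d_2\le N/q'}h(d_2)d_2$ and $\sum_{d_2>N/q'}h(d_2)/d_2$ over $d_2$ coprime to $2q$. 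The required input is the average order of these multiplicative weights, namely $\sum_{d_2\le y}h(d_2)d_2\ll y$ and the corresponding tail estimate, both of which follow from the associated Dirichlet series by partial summation.

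Carrying this out shows each piece is $O(A_qN)$, and since $A_q\ll q/\varphi(q)$ with an absolute constant, the total error is $O\bigl(\tfrac{q}{\varphi(q)}N\bigr)$; the relative error is therefore in fact $O(q/N)$, comfortably inside the stated $O((q/N)^{1/2})$ once $N\ge q$. The delicate points are keeping the dependence on $q$ explicit through $L=q'd/\gcd(q,d)$ and making the average-order estimates uniform in $q$, but the clean square-root form of the error leaves ample room, so no sharp constants are needed.
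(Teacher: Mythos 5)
Your proof is correct, but it takes a genuinely different route from the paper. The paper treats the sum by Mellin inversion: it writes the left side of~\eqref{eq:6.5} as $\frac{N}{2\pi i}\int_{(2)}\Gamma(s+1)N^sZ_q(s)\,ds$ with $Z_q(s)=\sum_{n\equiv 0(\mod [2,q])}H(n)n^{-s}$, factors $Z_q(s)$ as $\zeta(s)$ times an Euler product, reads off the residue $1/(2C\varphi(q))$ at $s=1$, and shifts the contour to ${\rm Re}\, s=\tfrac12$, which is where the relative error $O((q/N)^{1/2})$ comes from. You instead open up $H=1*h$ with $h(p)=1/(p-2)$ supported on odd squarefree integers, interchange summation, evaluate the inner geometric series exactly, and recover the main term by telescoping the Euler product against the definition~\eqref{eq:6.3} of $C$; the arithmetic content is identical to the paper's residue computation~\eqref{eq:6.7}, just carried out at $s=1$ by hand. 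Your error analysis is sound: the reduction to $L=q'd_2$ with $d_2$ coprime to $2q$, the average-order bounds $\sum_{d_2\le y}h(d_2)d_2\ll y$ and $\sum_{d_2>y}h(d_2)/d_2\ll 1/y$, and the bound $A_q\ll q/\varphi(q)$ (the ratio of local factors is $1+1/p(p-2)$, so the discrepancy is a convergent product) all check out, and they deliver the sharper relative error $O(q/N)$, which of course implies the stated $O((q/N)^{1/2})$ for $N\ge q$. What each approach buys: yours is elementary (no contour shift, no bounds on $\zeta$ or on the Euler product on the half-line) and quantitatively stronger; the paper's is shorter to write down and runs in parallel with the explicit-formula treatment of $P(\chi)$ in Section 3, which is presumably why the authors chose it. Since the lemma is only ever applied with $N\ge q^{b+1}$, the improvement is not needed, but there is no gap in your argument.
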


\begin{proof}
Pulling out the constant factor $2C$ we evaluate the sum
\begin{equation}\label{eq:6.6}
\sum_{\substack{n\equiv 0(\mod q)\\n\,\,{\rm even}}}H(n)ne^{-n/N}
= \frac{N}{2\pi i}\int_{(2)}\Gamma(s+1)N^sZ_q(s)ds
\end{equation} 
where 
\begin{equation*}\
Z_q(s)= \sum_{n\equiv 0(\mod k)} H(n)n^{-s}
\end{equation*} 
and $k =[2,q] = 2q/(2,q)$. We have
\begin{equation*}
\begin{aligned}
 Z_q(s) & = H(k) k^{-s}\sum_{\ell =1}^{\infty}\prod_{\substack{p|\ell\\p\nmid k}}
\Bigl(1+\frac{1}{p-2}\Bigr)\ell^{-s}\\
& = H(k)k^{-s}\prod_{p|k}\Bigl(1-\frac{1}{p^s}\Bigr) \prod_{p\nmid k}
\Bigl( 1+ \Bigl( 1+\frac{1}{p-2}\Bigr) \frac{1}{p^s}
\Bigl( 1-\frac{1}{p^s}\Bigr)^{-1}\Bigr)\\
& = H(k)k^{-s}\zeta(s)\prod_{p\nmid k}\Bigl(1+\frac{1}{p^s(p-2)}  \Bigr) .
\end{aligned}
\end{equation*} 
This has a simple pole at $s=1$ with residue 
\begin{equation}\label{eq:6.7}
{\rm res}_{s=1}Z_q(s)= \frac{H(k)}{k}\prod_{p\nmid k}\Bigl(1+\frac{1}{p(p-2)}\Bigr)
=\frac{1}{2C\varphi(q)} .
\end{equation} 
Moving the line of integration in~\eqref{eq:6.6} to 
${\rm Re \,  s} = \frac 12$, we obtain~\eqref{eq:6.5}.

\end{proof}

\section {\bf  Completion} 

In this section we begin by completing our second estimation for $S(q)$ by 
comparing its model. In this we need to draw on an unproved 
assumption. We assume the following. 

\bigskip 

{\bf Weak Hardy-Littlewood-Goldbach Conjecture}: {\sl For all 
sufficiently large even $n$ we have}
\begin{equation}\label{eq:7.1}
\delta {\frak S}(n)n < G(n) < (2-\delta){\frak S}(n)n, 
\end{equation} 
{\sl where} $0<\delta <1$ {\sl is fixed.} 

\bigskip

From~\eqref{eq:7.1} and Lemma 6.1 we see that, with $N\geq q^2$,
\begin{equation}\label{eq:7.2}
\delta\frac{N^2}{\varphi (q)}\Bigl(1+O\Bigl(\frac{1}{\log N}\Bigr) \Bigr) 
 < S(q) < (2-\delta) \frac{N^2}{\varphi (q)}
\Bigl(1+O\Bigl(\frac{1}{\log N}\Bigr)\Bigr)  .
\end{equation}

We now combine our two estimations for $S(q)$. By~\eqref{eq:7.2} 
and~\eqref{eq:5.5} we have
\begin{equation}\label{eq:7.3}
\delta  < 1+ \chi_1(-1) +\varepsilon(q,N) < 2-\delta
\end{equation} 
if $N\geq q^{b+1}$, where (a now slightly different) $\varepsilon(q,N)$
still satisfies~\eqref{eq:5.6}. Taking the constant $c$ in~\eqref{eq:4.1} 
to be sufficiently small, we see from~\eqref{eq:5.6} that
\begin{equation}\label{eq:7.3}
|\varepsilon(q,N)| \leq \tfrac{\delta}{2}. 
\end{equation} 

Since $1 +\chi_1(-1) = 0$ or $= 2$, this is not possible.

\bigskip

{\bf Conclusion}:  The assumption~\eqref{eq:7.1} with some positive $\delta$ 
implies that there is a real zero-free interval $s\geq 1-c(\delta)/\log q$,  
(and hence a zero-free region in the plane 
$\sigma \geq 1-c(\delta)/\log q(|t|+2)$
for all $L(s,\chi)$ to all moduli $q$). In the case that $\chi_1(-1)=1$ only 
the upper bound in~\eqref{eq:7.1} is required and in the other case, 
that $\chi_1(-1)=-1$, only the lower bound is required.

%%%%%%%%%%%%%%%%%%%%%%%%%%%%%%%%%%%

\medskip 
Department of Mathematics, University of Toronto

Toronto, Ontario M5S 2E4, Canada 

\medskip

Department of Mathematics, Rutgers University

Piscataway, NJ 08903, USA

\end{document}